\providecommand{\U}[1]{\protect\rule{.1in}{.1in}}
\newtheorem{theorem}{Theorem}[section]
\newtheorem{conjecture}[theorem]{Conjecture}
\newtheorem{corollary}[theorem]{Corollary}
\newtheorem{definition}[theorem]{Definition}
\newtheorem{notation}[theorem]{Notation}
\newtheorem{proposition}[theorem]{Proposition}
\newtheorem{question}[theorem]{Question}
\newtheorem{remark}[theorem]{Remark}
\newcommand{\N}{\mathbb N}
\newcommand{\LL}{\mathcal{L}}
\DeclareMathOperator{\sat}{sat}
\newcommand{\vs}[1]{\langle #1 \rangle}
\date{}
\begin{document}

\title{The number system in rational base $3/2$ and the $3x+1$ problem}

\author{Shalom Eliahou and Jean-Louis Verger-Gaugry}

\begin{abstract}
The representation of numbers in rational base $p/q$ was introduced in 2008 by Akiyama, Frougny and Sakarovitch, with a special focus on the case $p/q=3/2$. Unnoticed since then, natural questions related to representations in that specific base turn out to intimately involve the Collatz $3x+1$ function. Our purpose in this note is to expose these links and motivate further research into them.
\end{abstract}

\keywords{Collatz conjecture, iteration, numeration, odometer.}

\subjclass[2020]{11B83, 68R15}

\maketitle

\baselineskip0.55cm

\section{Introduction}

The representation of numbers in rational base $p/q$ with $p,q$ coprime positive integers was introduced in 2008 in~\cite{AFS}, with a focus on the case $p/q=3/2$ to study Mahler's $Z$-number problem and the Josephus problem.

\smallskip
Given $n \in \N$, its representation in rational base $3/2$ is the unique expression of the form
$$
n = \sum_{i=0}^k \frac{a_i}2\left(\frac32\right)^i
$$
with digits $a_i \in \{0,1,2\}$ for all $i \ge 0$ and $a_k \neq 0$ if $n \ge 1$. We then denote the corresponding word in $\{0,1,2\}^*$ by
$$
\vs{n}=a_ka_{k-1}\cdots a_0.
$$
We set $\vs{0}=\epsilon$, the empty word, and we identify any word $0w \in \{0,1,2\}^*$ with $w$.
 
\smallskip
Denoting $\LL =\{\vs{n} \mid n \in \N\} \subset \{0,1,2\}^*$, here are two natural questions about $\LL$:
\begin{itemize}
\item[(1)] If $w \in \LL$, which extensions among $w0,w1,w2$ still belong to $\LL$?
\item[(2)] Given $k \ge 1$, what is the lexicographically largest word of length $k$ in $\LL$? Equivalently, what is the largest $n \in \N$ such that $\vs{n}$ is of length $k$?
\end{itemize}
Apparently unnoticed so far, answers to these questions turn out to intimately involve the Collatz $3x+1$ function, namely $T : \N \to \N$ defined by
\begin{equation}\label{eq T}
T(n) = 
\begin{cases} 
n/2 & \textrm{ if } n \equiv 0 \bmod 2, \\
(3n+1)/2 & \textrm{ if } n \equiv 1 \bmod 2.
\end{cases}
\end{equation}
The purpose of this note is to expose these links and motivate further research into them. Its contents are as follows. In Section~\ref{sec basic}, we recall basic properties of the numeration system in rational base $3/2$, in particular the recursive construction of the word $\vs{n}$ in Section~\ref{sec word} and the odometer allowing to construct $\vs{n+1}$ from $\vs{n}$ in Section~\ref{sec odometer}. We also propose a variant of the Collatz conjecture in Section~\ref{sec collatz}. In Sections~\ref{sec extend} and~\ref{sec saturated}, we address the above questions (1) and (2), respectively. Section~\ref{sec questions} concludes the note with two questions and two conjectures closely related to the Collatz conjecture and possibly of the same order of difficulty.

\section{Basics}\label{sec basic}
Following~\cite{AFS}, every $n\in \N$ has a unique expression of the form
$$
n=\frac12\big(a_k(3/2)^k+a_{k-1}(3/2)^{k-1}+\dots+a_0\big)
$$
with $a_i \in \{0,1,2\}$ for all $i$ and $a_k \neq 0$ if $n \ge 1$. Accordingly, we associate to $n$ the word 
$$\langle n \rangle=a_ka_{k-1}\cdots a_0 \in \{0,1,2\}^*,$$
called the \emph{representation of $n$ in rational base $3/2$}. We set $\vs{0}=\epsilon$, the empty word, and we identify any word $0w \in \{0,1,2\}^*$ with $w$. 

\smallskip
By Lemma 4.1 in~\cite{MWT}, the length of the word $\vs{n}$ is given by $|\vs{n}|=\log_{3/2}(n) + O(1)$. 

\begin{definition} We say that a word $w \in \{0,1,2\}^*$ is \emph{admissible} if $w=\vs{n}$ for some $n \in \N$. We denote by $\LL \subset \{0,1,2\}^*$ the language of admissible words, i.e. 
$$\LL=\{\vs{n} \mid n \in \N\}.$$
\end{definition}

As observed in~\cite{AFS}, \emph{the language $\LL$ is prefix-closed}, i.e. stable under taking prefixes. That is, every prefix $w'$ of an admissible word $w=w'w''$ is itself admissible.

\smallskip

Conversely, given an admissible word $w \in \LL$, which extensions among $w0,w1,w2$ remain admissible? The answer involves the Collatz $3x+1$ function. See Proposition~\ref{prop link 1}.

\subsection{Construction of $\vs{n}$}\label{sec word}
Given $n \in \N$, a recursive construction of $\vs{n} \in \{0,1,2\}^*$ may be described as follows. Set $\vs{0}=\epsilon$, the empty word.

\begin{proposition}\label{prop n1} Let $n_0 \in \N \setminus \{0\}$. Denote $\vs{n_0}=a_k\cdots a_0$ with $a_i \in \{0,1,2\}$ for all $i$. Then 
$$
\begin{cases}
\ a_0 \equiv 2n_0 \bmod 3, \\
\ \vs{n_0} = \vs{n_1}a_0 \textrm{ where } n_1= (2n_0-a_0)/3.
\end{cases}
$$
Moreover, we have $a_0=2n_0-3n_1$ and $a_0 \equiv n_1 \bmod 2$.
\end{proposition}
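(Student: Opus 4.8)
The plan is to read off the last digit $a_0$ from a single congruence modulo $3$, then peel it off to expose the base-$3/2$ representation of $n_1$. Throughout I take for granted the existence and uniqueness of the representation recalled at the start of this section.

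First I would clear denominators in the defining identity. Writing the representation of $n_0$ as $2n_0 = \sum_{i=0}^k a_i (3/2)^i$ and multiplying by $2^k$ yields the integer identity
$$
2^{k+1} n_0 = \sum_{i=0}^k a_i\, 3^i\, 2^{k-i}.
$$
Reducing modulo $3$, every term with $i \ge 1$ vanishes, leaving $2^{k+1} n_0 \equiv a_0\, 2^k \pmod 3$. The crux is the identity $2 \equiv -1 \pmod 3$: it lets me cancel the powers of $2$, since $2^{k+1} \equiv (-1)^{k+1}$ and $2^k \equiv (-1)^k$, giving $-n_0 \equiv a_0 \pmod 3$, that is $a_0 \equiv 2n_0 \pmod 3$. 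This proves the first assertion and shows that $2n_0 - a_0$ is divisible by $3$, so $n_1 = (2n_0 - a_0)/3$ is a nonnegative integer (nonnegativity being clear since $a_0 \le 2 \le 2n_0$). The relation $a_0 = 2n_0 - 3n_1$ is then immediate, and reducing it modulo $2$ gives $a_0 \equiv -3n_1 \equiv n_1 \pmod 2$, the last assertion.

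It remains to identify $\vs{n_1}$. For this I would factor $3/2$ out of the tail of the representation of $n_0$: from $2n_0 - a_0 = \tfrac32\sum_{j=0}^{k-1} a_{j+1}(3/2)^j$ and $2n_0 - a_0 = 3n_1$ I obtain
$$
n_1 = \frac12 \sum_{j=0}^{k-1} a_{j+1}\left(\tfrac32\right)^j.
$$
Since the coefficients $a_1, \dots, a_k$ again lie in $\{0,1,2\}$, this is a valid base-$3/2$ expansion of $n_1$, hence by uniqueness it \emph{is} the expansion of $n_1$, so that $\vs{n_1} = a_k \cdots a_1$ (with the convention $\vs{0}=\epsilon$ covering the case $n_1 = 0$, and with leading digit $a_k \neq 0$ when $n_1 \ge 1$). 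Appending $a_0$ recovers $\vs{n_0} = \vs{n_1}a_0$.

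The only genuinely delicate point is the congruence extraction: one must clear the $3/2$ powers correctly and then exploit $2 \equiv -1 \pmod 3$ to remove the spurious factor $2^k$ — without that cancellation the clean formula $a_0 \equiv 2n_0 \pmod 3$ does not drop out. Everything else is bookkeeping together with the uniqueness of representations.
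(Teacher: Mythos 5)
Your proof is correct, and it is organized in a genuinely different way from the paper's. The paper's proof is a one-liner: it rewrites $n_0 = \left(\frac12\big(a_k(3/2)^{k-1}+\dots+a_1\big)\right)\frac32+\frac{a_0}2$ and immediately reads off both the congruence $a_0 \equiv 2n_0 \bmod 3$ and the identity $\vs{n_0}=\vs{n_1}a_0$; the step ``hence $a_0 \equiv 2n_0 \bmod 3$'' silently relies on the fact, recorded just before Section~2.1 (citing \cite{AFS}), that $\LL$ is prefix-closed, so that the bracketed quantity is already known to be an integer $n_1$ with $\vs{n_1}=a_k\cdots a_1$, from which $2n_0 = 3n_1+a_0$ and everything else follows. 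You reverse this logical order: you first establish the congruence unconditionally, by clearing denominators to the integer identity $2^{k+1}n_0=\sum_{i=0}^k a_i 3^i 2^{k-i}$ and reducing modulo $3$ via $2\equiv -1 \pmod 3$; this gives you the integrality of $n_1=(2n_0-a_0)/3$ with no appeal to prefix-closedness, and only then do you identify $\vs{n_1}=a_k\cdots a_1$ by uniqueness of representations. The trade-off is clear: the paper's argument is shorter because it leans on the cited closure property of $\LL$, whereas yours is self-contained and in fact reproves the relevant instance of that property (deleting the last digit of an admissible word yields an admissible word). Both proofs obtain the parity claim the same way, by reducing $a_0 = 2n_0-3n_1$ modulo $2$; your additional checks (nonnegativity of $n_1$, the case $n_1=0$ versus $n_1\ge 1$ for the leading-digit convention) are correct and slightly more careful than what the paper writes out.
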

\begin{proof} As $\vs{n_0}=a_k\cdots a_0$ , we have 
$$
\begin{aligned}
n_0 &= \frac12\big(a_k(3/2)^k+\dots+a_1(3/2)+a_0\big) \\
&= \left(\frac12\big(a_k(3/2)^{k-1}+\dots+a_1\big)\right)\frac32+\frac{a_0}2. \\
\end{aligned}
$$
Hence $a_0\equiv 2n_0 \bmod 3$, and setting $n_1=(2n_0-a_0)/3$, we have $\vs{n_1}=a_k\cdots a_1$. The claims follow.
\end{proof}

\subsection{The odometer}\label{sec odometer}

The \emph{odometer} constructs the word $\vs{n+1}$ from the word $\vs{n}$. It is described as a finite automaton in~\cite{AFS}. Here is an equivalent description.

Let $\sigma=(2 \ 1 \ 0)$ be the cyclic permutation $2 \mapsto 1 \mapsto 0 \mapsto 2$. For a word $u \in \{0,1,2\}^*$, let $u^\sigma \in \{0,1,2\}^*$ denote the word obtained by applying $\sigma$ to each letter of $u$. For instance, $12012^\sigma=01201$, then identified to $1201$. 

\begin{proposition}\label{prop odometer} Let $n \in \N$. If $\vs{n}=u0v \in \{0,1,2\}^*$ with $v \in \{1,2\}^*$, then $\vs{n+1}=u2v^\sigma$.
\end{proposition}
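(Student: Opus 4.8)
The plan is to compare directly the integers encoded by the words $u0v$ and $u2v^\sigma$, using the defining expansion in base $3/2$. First I would fix notation: write $v=a_{m-1}\cdots a_0$ with each $a_j\in\{1,2\}$, place the distinguished $0$ at position $m$, and let $u=a_k\cdots a_{m+1}$ carry the higher-order digits, so that $n=\frac12\sum_{i=0}^{k}a_i(3/2)^i$. When $\langle n\rangle$ contains no $0$ at all, I would absorb this into the same framework via the identification $0w=w$, taking $u=\epsilon$ and treating the $0$ as a virtual leading digit at position $m=|v|$; this is precisely the ``carry'' case in which the word length grows by one.

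The engine of the whole argument is the single observation that $\sigma(d)=d-1$ for every $d\in\{1,2\}$, so that $\sigma(a_j)-a_j=-1$ uniformly along the suffix $v$. Granting this, the integer encoded by $u2v^\sigma$ differs from $n$ only through the digits of $v$ and the digit at position $m$: those of $u$ are untouched, the digit at position $m$ passes from $0$ to $2$, and each $a_j$ becomes $a_j-1$. The resulting difference is
$$
\frac12\left(2\,(3/2)^m-\sum_{j=0}^{m-1}(3/2)^j\right),
$$
and since the geometric sum equals $\sum_{j=0}^{m-1}(3/2)^j=2\big((3/2)^m-1\big)$, the two $(3/2)^m$ contributions cancel and exactly $1$ remains. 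Hence $u2v^\sigma$ represents $n+1$.

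It then remains only to confirm that $u2v^\sigma$ is the \emph{canonical} word $\langle n+1\rangle$. For this I would check the two admissibility constraints: all its letters lie in $\{0,1,2\}$, since those of $u$ do, the inserted letter is $2$, and $\sigma$ maps $\{1,2\}$ into $\{0,1\}$; and its leading letter is nonzero, being the leading letter of $u$ when $u\neq\epsilon$ and the inserted $2$ when $u=\epsilon$. By the uniqueness of the base $3/2$ representation, these two facts force $\langle n+1\rangle=u2v^\sigma$.

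I expect no real obstacle: once the identity $\sigma(d)=d-1$ on $\{1,2\}$ is isolated, the computation collapses to a one-line geometric series. The only point genuinely needing care is the bookkeeping for the case $\langle n\rangle\in\{1,2\}^*$, where there is no actual $0$: here the identification $0w=w$ must be invoked explicitly to legitimize the virtual leading $0$ and to account for the increase in length. I would handle this case by name rather than fold it silently into the generic argument.
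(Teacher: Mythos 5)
Your proof is correct, but it takes a genuinely different route from the paper's --- for the simple reason that the paper offers no argument of its own: its ``proof'' is a pointer to~\cite[Section 3.2.3]{AFS}, where the odometer is defined and justified in terms of a finite automaton. Your argument is instead a direct, self-contained verification, and the key computation is right: since $\sigma(d)=d-1$ for $d\in\{1,2\}$, the value encoded by $u2v^\sigma$ exceeds that encoded by $u0v$ by
$$
\frac12\Bigl(2\,(3/2)^m-\sum_{j=0}^{m-1}(3/2)^j\Bigr)
=\frac12\Bigl(2\,(3/2)^m-2\bigl((3/2)^m-1\bigr)\Bigr)=1,
$$
where $m=|v|$. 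The step that genuinely requires care --- and which you correctly isolate --- is the passage from ``$u2v^\sigma$ has value $n+1$'' to ``$u2v^\sigma=\vs{n+1}$'': most words over $\{0,1,2\}$ do not evaluate to an integer at all, so what is needed is exactly the uniqueness of expressions $n=\frac12\sum_i a_i(3/2)^i$ with digits in $\{0,1,2\}$ and nonzero leading digit, asserted in Section~\ref{sec basic} following~\cite{AFS}; exhibiting one such expression for $n+1$ then pins down $\vs{n+1}$. Your explicit handling of the carry case $\vs{n}\in\{1,2\}^*$ (virtual leading $0$ at position $m=|v|$, length growing by one) is likewise necessary and correct, and it covers $n=0$ as well. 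What your approach buys is self-containedness: the proposition follows from the definition of the numeration system alone, with no appeal to the automaton of~\cite{AFS}. What the paper's citation buys is brevity and deference to the source where the odometer and the language $\LL$ are developed systematically; a reader wanting a proof inside this paper would be better served by your computation.
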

\begin{proof} See~\cite[Section 3.2.3, pp. 66-67]{AFS}.
\end{proof}

The rule applies even if $u$ or $v$ is the empty word. In particular, if $\vs{n} = v \in \{1,2\}^*$, then identifying $v$ with $0v$, we get $\vs{n+1}=2v^\sigma$. Table~\ref{tab words} below, listing $\vs{n}$ for $1 \le n \le 27$, was constructed by hand using this description of the odometer. The interested reader will effortlessly extend it much farther.
\begin{table}[h]
$$
\begin{array}{ccccc}
\begin{array}{|r||r|}
\hline
n & \vs{n}\\
\hline
1 & 2 \\
2 & 21 \\
3 & 210 \\
4 & 212 \\
5 & 2101 \\
6 & 2120 \\
7 & 2122 \\
8 & 21011 \\
9 & 21200 \\
\hline
\end{array} & & 
\begin{array}{|r||r|}
\hline
n & \vs{n} \\
\hline
10 & 21202 \\
11 & 21221 \\
12 & 210110 \\
13 & 210112 \\
14 & 212001 \\
15 & 212020 \\
16 & 212022 \\
17 & 212211 \\
18 & 2101100 \\
\hline
\end{array} & &
\begin{array}{|r||r|}
\hline
n & \vs{n}\\
\hline
19 & 2101102 \\
20 & 2101121 \\
21 & 2120010 \\
22 & 2120012 \\
23 & 2120201 \\
24 & 2120220 \\
25 & 2120222 \\
26 & 2122111 \\
27 & 21011000 \\
\hline
\end{array}
\end{array}
$$
\caption{The words $\vs{n}$ for $1 \le n \le 27$}
\label{tab words}
\end{table}

\subsection{The Collatz conjecture}\label{sec collatz}

Dating back to the 1930's, this conjecture deals with iterates $T^{(k)}$ of the Collatz function $T:\N \to \N$ defined in~\eqref{eq T}. 

\begin{conjecture}\label{conj col1} Let $n_0 \in \N$, $n_0 \ge 1$. Then there exists $k \ge 1$ such that $T^{(k)}(n_0)=1$.
\end{conjecture}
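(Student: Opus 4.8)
The plan has to begin with a frank admission: the statement to be proved is precisely the Collatz conjecture, one of the most famous open problems in mathematics, so what follows is not a proof but an account of the natural lines of attack together with the barrier each one hits. The most suggestive starting point is the multiplicative heuristic. Along an orbit $n_0, T(n_0), T^{(2)}(n_0), \dots$, an even step multiplies the current value by $1/2$ while an odd step multiplies it by roughly $3/2$. If odd and even steps occurred with equal frequency, as one expects for a ``random'' integer, then the geometric mean factor per step would be $\sqrt{(1/2)(3/2)} = \sqrt{3}/2 < 1$, so typical orbits should contract and eventually reach the cycle $1 \to 2 \to 1$. First I would try to turn this heuristic into a genuine descent argument.

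The standard way to do that is through the stopping time. For $n_0 \ge 1$, define the stopping time to be the least $k$ with $T^{(k)}(n_0) < n_0$; if every $n_0$ had finite stopping time, an easy induction on $n_0$ would finish the proof. To analyze the stopping time one records the parity vector $(\epsilon_0, \dots, \epsilon_{k-1})$ of the successive iterates and notes that $T^{(k)}(n_0)$ is an affine function of $n_0$ determined by this vector, with $n_0$ ranging over a fixed residue class modulo $2^k$. A counting argument over these residue classes shows, by work going back to Terras and Everett, that the set of $n_0$ with finite stopping time has natural density $1$. The difficulty is that ``density $1$'' is not ``everything'': the argument says nothing about the exceptional $n_0$, and without an a priori bound on how large a stopping time can be, one cannot promote density $1$ to an exceptionless statement.

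The strongest rigorous progress sharpens exactly this point. Tao (2019) proved that almost all orbits, in the sense of logarithmic density, eventually fall below any prescribed function tending to infinity, essentially the best possible ``almost all'' descent statement, yet it still leaves a logarithmically negligible set of potential counterexamples untouched and does not preclude a single divergent or non-trivial periodic orbit. Given the setting of this note, I would also explore the base-$3/2$ angle: Proposition~\ref{prop n1} and the odometer of Section~\ref{sec odometer} encode the arithmetic of the map $n \mapsto (2n-a_0)/3$, which is the inverse of multiplication by $3/2$, so the digit dynamics of $\vs{n}$ run parallel to the Collatz recursion, and the extension question for $\LL$ (see Proposition~\ref{prop link 1}) translates directly into parity questions for $T$. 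The hope would be that the combinatorial rigidity of admissible words supplies a monovariant that the purely analytic approaches lack.

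The hard part, and the reason the conjecture remains open, is intrinsic: the parity sequence of an orbit behaves pseudo-randomly, no Lyapunov function or other monovariant forcing descent is known, and ruling out simultaneously both divergence to infinity and the existence of a second cycle appears to demand genuinely new ideas rather than a refinement of any of the heuristics above. I therefore expect this final step, converting almost-everywhere contraction into everywhere convergence, to be not merely the main obstacle but an impassable one with present techniques.
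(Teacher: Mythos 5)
This statement is the Collatz conjecture itself, which the paper states as a conjecture precisely because no proof exists; the paper offers no argument for it, and you were right not to manufacture one. Your assessment — that the density-one stopping-time results of Terras and Everett, Tao's almost-all theorem, and the base-$3/2$ digit dynamics of this paper all fall short of an exceptionless statement — is accurate and consistent with how the paper treats the conjecture, so there is nothing to correct.
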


Here is an equivalent version, as shown below.
\begin{conjecture}\label{conj col2} Let $n_0 \in \N$, $n_0 \ge 1$. Then the proportion of even numbers in the sequence of iterates $T^{(k)}(n_0)$ tends to $1/2$ as $k$ tends to infinity.
\end{conjecture}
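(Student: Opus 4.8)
The plan is to read the proportion off the eventual behaviour of the orbit. First I would invoke Conjecture~\ref{conj col1} for $n_0$, which furnishes a minimal $k_0\ge 1$ with $T^{(k_0)}(n_0)=1$. Since $T(1)=2$ and $T(2)=1$, the orbit is $2$-periodic from step $k_0$ on: $T^{(k_0+2j)}(n_0)=1$ and $T^{(k_0+2j+1)}(n_0)=2$ for every $j\ge 0$. Thus past step $k_0$ the iterates alternate strictly between the odd value $1$ and the even value $2$, so the tail of the sequence carries exactly one even term out of every two.

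Next I would make this quantitative. Let $E(K)$ denote the number of even terms among $T^{(1)}(n_0),\dots,T^{(K)}(n_0)$. Splitting the count into the fixed prefix $k\le k_0$, which contributes a constant $c_0\le k_0$ independent of $K$, and the alternating tail $k_0<k\le K$, which contributes $\lfloor (K-k_0)/2\rfloor$ or $\lceil (K-k_0)/2\rceil$ even terms, gives $E(K)=\tfrac12(K-k_0)+O(1)$. Hence $E(K)/K=\tfrac12-k_0/(2K)+O(1/K)\to\tfrac12$, which is the assertion. The limit is forced entirely by the balanced parity of the terminal cycle $\{1,2\}$, the prefix washing out at rate $O(1/K)$.

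The hard part is that this statement cannot be established unconditionally: it holds for $n_0$ exactly when the orbit of $n_0$ reaches $1$, so only the forward implication above is within reach. For the reverse direction one would write $T^{(K)}(n_0)/n_0=(1/2)^{E(K)}\prod_i\bigl(\tfrac32+\tfrac1{2n_i}\bigr)$, the product running over the odd iterates $n_i$ met in the first $K$ steps, each factor lying in $[\tfrac32,2]$. If $E(K)/K\to\tfrac12$ then the odd-step count also satisfies $(K-E(K))/K\to\tfrac12$, and once the orbit takes large values every odd factor is close to $\tfrac32$, so the geometric drift per step approaches $\sqrt3/2<1$; this excludes divergence and, the orbit being integer-valued and bounded, forces eventual periodicity. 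One is then reduced to ruling out every Collatz cycle of even-density $\tfrac12$ other than $\{1,2\}$, and this exclusion carries the full depth of the Collatz problem. That final step is the main obstacle, and it is exactly why the displayed statement is only as accessible as Conjecture~\ref{conj col1} itself.
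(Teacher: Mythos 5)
Your forward direction is correct and matches the paper exactly: the paper's Proposition on the equivalence of Conjectures~\ref{conj col1} and~\ref{conj col2} disposes of this implication in one line (the orbit collapses to $1,2,1,2,\dots$), and you have merely made the prefix/tail counting explicit, which is fine. You are also right that the statement itself is an open conjecture, so only a conditional proof is possible; the real content to compare is the equivalence.

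The genuine gap is your assessment of the converse. You claim that deducing Conjecture~\ref{conj col1} from Conjecture~\ref{conj col2} ultimately requires ``ruling out every Collatz cycle of even-density $1/2$ other than $\{1,2\}$'' and that this ``carries the full depth of the Collatz problem.'' Both halves of that claim fail. First, the paper proves the converse completely and elementarily, with no case split between divergence and cycles: if Conjecture~\ref{conj col1} fails, take $m \ge 10$ with $\min \Omega(m) = m$ (the minimum of a counterexample orbit); an easy induction, using $3 + 1/n \le 3.1$ for $n \ge m \ge 10$, gives $T^{(k)}(m) \le (3.1)^{k_1} 2^{-k} m$ where $k_1$ counts the odd terms among the first $k$ iterates. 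Since Conjecture~\ref{conj col2} forces $k_1/k \to 1/2 < \ln 2/\ln 3.1$, eventually $T^{(k)}(m) < m$, contradicting minimality. Second, even inside your own framework the cycle exclusion you declare deep is trivial: for a cycle of period $p$ with exactly $p/2$ odd steps, your product identity yields $\prod_{i}\bigl(\tfrac32 + \tfrac{1}{2n_i}\bigr) = 2^{p/2}$ over the odd terms $n_i$, while each factor is at most $2$ with equality if and only if $n_i = 1$; hence every odd term equals $1$ and the cycle is $\{1,2\}$. So the reverse implication is not ``only as accessible as Conjecture~\ref{conj col1} itself''; it is an unconditional half-page argument, and the two conjectures are genuinely equivalent, which is precisely the point of the paper's Proposition. (A minor further quibble: your $E(K)$ counts parities of $T^{(1)},\dots,T^{(K)}$ while your product factors are governed by the parities of $T^{(0)},\dots,T^{(K-1)}$; this off-by-one is harmless asymptotically but should be stated.)
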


\begin{proposition} Conjectures~\ref{conj col1} and~\ref{conj col2} are equivalent.
\end{proposition}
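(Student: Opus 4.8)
The plan is to track the size of the iterates logarithmically. Write the trajectory as $n_0,n_1,\dots$ with $n_{i+1}=T(n_i)$, and for each $k$ let $E_k$ and $O_k$ denote the number of even and of odd terms among $n_0,\dots,n_{k-1}$, so that $E_k+O_k=k$ and the proportion of even numbers up to step $k$ is exactly $E_k/k$. First I would establish the exact identity
$$\log n_k = \log n_0 - k\log 2 + O_k\log 3 + S_k, \qquad S_k = \sum_{0\le i<k,\; n_i\ \text{odd}} \log\Bigl(1+\tfrac{1}{3n_i}\Bigr),$$
obtained by summing $\log(n_{i+1}/n_i)$, which equals $-\log 2$ at an even step and $\log(3/2)+\log(1+1/(3n_i))$ at an odd step. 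The two features of the correction term $S_k$ that I would record are that $S_k\ge 0$ and that each summand is at most $\log(4/3)$, with equality if and only if $n_i=1$.

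The implication Conjecture~\ref{conj col1} $\Rightarrow$ Conjecture~\ref{conj col2} is the easy one. If $T^{(k_0)}(n_0)=1$, then from step $k_0$ on the iterates run through the cycle $1\to 2\to 1\to\cdots$, in which exactly one term out of two is even. The finitely many terms before step $k_0$ alter the count of evens by a bounded amount, negligible after dividing by $k$, so $E_k/k\to 1/2$.

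For the converse I would argue as follows. Assume $E_k/k\to 1/2$, equivalently $O_k/k\to 1/2$. Since $n_k\ge 1$ forces $\log n_k\ge 0$, the identity gives $S_k\ge k\log 2-O_k\log 3-\log n_0$, whence $\liminf_k S_k/k\ge \log 2-\tfrac12\log 3$. On the other hand $S_k\le O_k\log(4/3)$, whence $\limsup_k S_k/k\le\tfrac12\log(4/3)$. The decisive point is the numerical identity $\tfrac12\log(4/3)=\log 2-\tfrac12\log 3$, which squeezes $S_k/k\to\log 2-\tfrac12\log 3$ and, crucially, forces the bound $S_k\le O_k\log(4/3)$ to be asymptotically tight. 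Writing the deficit $d_i=\log(4/3)-\log(1+1/(3n_i))\ge 0$, tightness means $\tfrac1k\sum_{i<k,\,n_i\ \text{odd}} d_i\to 0$; since $d_i\ge\log(6/5)>0$ whenever $n_i$ is odd with $n_i\ge 3$, the number of such indices below $k$ is $o(k)$, while the number of indices with $n_i=1$ is $O_k-o(k)\sim k/2\to\infty$. In particular $n_i=1$ for some $i$, so the trajectory reaches $1$.

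The main obstacle, and the entire content of the converse, is the control of $S_k$. Naively one hopes for $S_k=o(k)$, so that $\log n_k/k\to\log(\sqrt 3/2)<0$ would force $n_k\to 0$; but this fails precisely because the cycle $\{1,2\}$ makes $S_k$ grow linearly. The correct handling is the squeezing above, whose success hinges entirely on the exact equality $\tfrac12\log(4/3)=\log 2-\tfrac12\log 3$: this is what converts \emph{``density of evens equals $1/2$''} into \emph{``almost every odd value visited equals $1$''}, and in particular produces an index with $n_i=1$ directly, bypassing any need to classify cycles. Verifying that equality and the elementary inequalities $\log(1+1/(3n))\le\log(4/3)$ (resp.\ $\le\log(10/9)$ for odd $n\ge 3$) is routine.
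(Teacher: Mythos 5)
Your proof is correct, and it takes a genuinely different route from the paper's. The paper proves the hard direction by contradiction via a minimal counterexample: if Conjecture~\ref{conj col1} fails, pick $m \ge 10$ with $\min\Omega(m)=m$, so every iterate stays $\ge 10$; the crude bound $T(n)\le (3.1)n/2$ at odd steps then gives $T^{(k)}(m)\le (3.1)^{k_1}2^{-k}m$, and since $\ln 2/\ln 3.1 > 0.6$ while Conjecture~\ref{conj col2} forces $k_1/k\to 1/2$, eventually $T^{(k)}(m)<m$, contradicting minimality. You dispense with the minimal counterexample entirely: your exact identity $\log n_k=\log n_0-k\log 2+O_k\log 3+S_k$, combined with the exact equality $\tfrac12\log(4/3)=\log 2-\tfrac12\log 3$, squeezes $S_k/k$ and forces the bound $S_k\le O_k\log(4/3)$ to be asymptotically tight, so all but $o(k)$ of the odd iterates must equal $1$. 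This buys a strictly stronger, pointwise statement: the density hypothesis for a \emph{single} trajectory already forces that trajectory to hit $1$ (indeed to enter the cycle $\{1,2\}$ and visit $1$ infinitely often), whereas the paper's argument yields only the universal implication and must invoke Conjecture~\ref{conj col2} at the hypothetical minimal counterexample. The trade-off is robustness versus precision: the paper needs only rough numerics ($3.1$ and $0.6$, with slack to spare), while your squeeze hinges on an exact coincidence of constants --- which, as you correctly observe, is exactly what rescues the argument from the failure of the naive estimate $S_k=o(k)$ caused by the $\{1,2\}$ cycle. Your bookkeeping (the identity, $S_k\le O_k\log(4/3)$ with equality only at $n_i=1$, and the deficit bound $d_i\ge\log(6/5)$ for odd $n_i\ge 3$) all checks out.
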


We thank the anonymous referee for insisting to formally state Proposition 6 and carefully prove it, as opposed to just a remark with informal arguments as in the original version of this article.

\begin{proof}
If Conjecture~\ref{conj col1} holds, then at some point the sequence of iterates $T^{(k)}(n_0)$ collapses to $1,2,1,2,1,2,\dots$, whence Conjecture~\ref{conj col2}. 

Conversely, assume Conjecture~\ref{conj col2} holds. Given an integer $n \ge 1$, let $\Omega(n)=(n,T(n),T^{(2)}(n),\dots)$ denote the trajectory of $n$ under iteration of $T$. Moreover, given $k \ge 1$, let $$\Omega_k(n)=(n,T(n),T^{(2)}(n),\dots,T^{(k-1)}(n)),$$ and let $k_1$ denote the number of odd integers in $\Omega_k(n)$. Assume for a contradiction that Conjecture~\ref{conj col1} fails. Then there is an integer $m \ge 10$ such that $\min \Omega(m)=m$. (Note that $\min \Omega(n)=1$ for $n \le 9$.) We claim that for all $k \ge 1$, we have
\begin{equation}\label{eq upper}
T^{(k)}(m) \le \frac{(3.1)^{k_1}}{2^k} m.
\end{equation}
The proof is straightforward, by induction on $k$. Indeed, \eqref{eq upper} holds for $k=1$, since $m$ is odd by minimality in $\Omega(m)$ and $T(m)=(3m+1)/2=(3+1/m)m/2 \le (3.1)m/2$ since $m \ge 10$. Let now $k \ge 1$ and assume that $T^{(k)}(m) \le (3.1)^{k_1}/2^k m$. Denote $n=T^{(k)}(m)$. If $n$ is even, then $T^{(k+1)}(m)= T(n) =n/2 \le (3.1)^{k_1}/2^{k+1} m$, as desired. If $n$ is odd, then $$T^{(k+1)}(m) = T(n) = (3+1/n)n/2 \le (3+1/m)n/2 \le (3.1)n/2$$  since $n \ge m \ge 10$. Hence $T^{(k+1)}(m) \le (3.1)^{k_1+1}/2^{k+1}m$, as desired. The claim is proved.

Now $(3.1)^{k_1}/2^{k} \ge 1$ if and only if $k_1/k \ge \ln(2)/\ln(3.1)$, and $\ln(2)/\ln(3.1) > 0.6$. By Conjecture 5, for $k$ large enough, the proportion $k_1/k$ will fall below $0.6$. Hence $T^{(k)}(m) \le (3.1)^{k_1}/2^k m < m$, contradicting the minimality of $m$. This completes the proof of the proposition.
\end{proof}

\smallskip
At the time of writing, Conjecture~\ref{conj col1} has been verified up to $n_0 \le 2^{68}$. See~\cite{B}. With this bound, the results of~\cite{E} and its Table 2 on page 54 imply that, besides the trivial cycle $\{1,2\}$ under $T$, no other cycle, if any, can have length less than $114,208,327,604$. See~\cite{L2} for a general reference on the conjecture, and~\cite{T} for recent strong results about it.

\section{Links with the Collatz function}\label{sec links}

We now address the two questions about $\LL=\{\vs{n} \mid n \in \N\}$ asked in the Introduction, and show that in both cases, the answer naturally involves the Collatz function $T$.

\subsection{Extending admissible words}\label{sec extend}
Here is the first question. Given $w \in \LL$, which extensions among $w0,w1,w2$ still belong to $\LL$? 

\begin{proposition}\label{prop link 1} Let $w \in \LL$, say $w = \vs{n}$ for some $n \in \N$. Then \vspace{-0.1cm}
$$
\begin{cases}
\ w1 \in \LL \iff n \equiv 1 \bmod 2, \textrm{ and then } w1=\vs{(3n+1)/2} =\vs{T(n)}. \\
\ w0 \in \LL \iff n \equiv 0 \bmod 2 \iff w2 \in \LL, \textrm{ and then } w0=\vs{3n/2} \textrm{ and }\; w2=\vs{(3n+2)/2}.
\end{cases}
$$
\end{proposition}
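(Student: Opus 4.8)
The plan is to characterize, for each digit $b \in \{0,1,2\}$, exactly when the one-letter extension $wb$ lies in $\LL$, by reading Proposition~\ref{prop n1} backwards. That proposition tells us how the last digit of an admissible word $\vs{m}$ together with the recursion $m \mapsto m_1 = (2m-b)/3$ pins down $m$; the idea is to invert this to reconstruct a candidate $m$ from $n$ and the proposed last digit $b$. A preliminary point I would record is that the map $n \mapsto \vs{n}$ is injective: the digits $a_i$ of $\vs{m}$ recover $m = \tfrac12\sum_i a_i(3/2)^i$, so $\vs{m_1} = \vs{n}$ forces $m_1 = n$.

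For the forward direction, I would assume $wb = \vs{m}$ for some $m \in \N$. Then $b$ is the last digit of $\vs{m}$, so Proposition~\ref{prop n1} gives $\vs{m} = \vs{m_1}\,b$ with $m_1 = (2m-b)/3$. Since $\vs{m_1} = w = \vs{n}$, injectivity forces $m_1 = n$, hence $2m = 3n+b$ and $m = (3n+b)/2$. In particular $3n+b$ must be even, i.e. $b \equiv n \pmod 2$ (using $3n \equiv n$). For the converse, I would assume $b \equiv n \pmod 2$ and set $m = (3n+b)/2 \in \N$. Applying Proposition~\ref{prop n1} to $m$, its last digit $a_0$ satisfies $a_0 \equiv 2m = 3n+b \equiv b \pmod 3$, so $a_0 = b$ as both lie in $\{0,1,2\}$; then $m_1 = (2m-a_0)/3 = n$, giving $\vs{m} = \vs{n}\,b = wb$. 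Thus $wb \in \LL$ with $wb = \vs{(3n+b)/2}$.

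With this equivalence in hand, the statement follows by splitting on the parity of $n$: if $n$ is odd the only admissible $b$ is $b=1$, yielding $w1 = \vs{(3n+1)/2} = \vs{T(n)}$; if $n$ is even the admissible digits are $b \in \{0,2\}$, yielding $w0 = \vs{3n/2}$ and $w2 = \vs{(3n+2)/2}$. The calculation is routine, and I expect no genuine obstacle; the only subtlety worth flagging is that admissibility of $\vs{m}$ a priori also demands the mod-$3$ compatibility $b \equiv 2m \bmod 3$ from Proposition~\ref{prop n1}, but this is automatic once $2m = 3n+b$, so that parity alone governs which extensions survive. A minor edge case is $n=0$ (so $w=\epsilon$), which is handled uniformly via the identification of $0w$ with $w$.
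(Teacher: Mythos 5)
Your proof is correct and takes essentially the same route as the paper's: both arguments rest entirely on Proposition~\ref{prop n1}, reading the last-digit recursion forwards and backwards. If anything, you are more thorough than the paper, whose proof records only the three forward implications ($wb \in \LL \Rightarrow b \equiv n \bmod 2$ and $wb = \vs{(3n+b)/2}$) and then asserts that ``the stated equivalences follow,'' whereas you make the needed converse explicit by setting $m = (3n+b)/2$, checking via Proposition~\ref{prop n1} that its last digit is $\equiv 2m \equiv b \bmod 3$ and hence equals $b$, and concluding $\vs{m} = \vs{n}b$.
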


\begin{proof} If $\vs{n}1\in \LL$, Proposition~\ref{prop n1} implies $1 \equiv n \bmod 2$ and $w1=\vs{(3n+1)/2}$. Similarly, if $\vs{n}0 \in \LL$ then $0 \equiv n \bmod 2$ and $w0=\vs{3n/2}$. Finally, if $\vs{n}2  \in \LL$ then $2 \equiv n \bmod 2$ and $w2=\vs{(3n+2)/2}$. The stated equivalences follow.
\end{proof}

\begin{corollary}\label{cor n odd} Let $n \in \N$ be odd. Then $\langle T(n) \rangle=\langle n \rangle1$. 
\end{corollary}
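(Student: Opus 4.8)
The plan is to derive this immediately from the first case of Proposition~\ref{prop link 1}. Since $\vs{n} \in \LL$ by definition, I would set $w = \vs{n}$ and invoke the equivalence stating that $w1 \in \LL \iff n \equiv 1 \bmod 2$. As $n$ is odd, this shows $w1 \in \LL$ and, more to the point, supplies the identity $w1 = \vs{(3n+1)/2} = \vs{T(n)}$. Since $T(n) = (3n+1)/2$ for odd $n$ by~\eqref{eq T}, this is exactly the assertion $\vs{T(n)} = \vs{n}1$, so the corollary is a one-line consequence.

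For completeness, I would note that the statement can equally be checked from scratch via Proposition~\ref{prop n1} applied to $n_0 = T(n)$. Writing $\vs{T(n)} = a_k \cdots a_0$, that proposition gives $a_0 \equiv 2 T(n) \bmod 3$; since $2 T(n) = 3n + 1 \equiv 1 \bmod 3$, we get $a_0 = 1$. Then $n_1 = (2 T(n) - a_0)/3 = 3n/3 = n$, so that $\vs{T(n)} = \vs{n_1}a_0 = \vs{n}1$. This independent verification is reassuring but unnecessary once Proposition~\ref{prop link 1} is available.

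There is essentially no obstacle here: the content of the corollary is entirely contained in Proposition~\ref{prop link 1}. The only point deserving a moment's care is the admissibility of $w1$ — one must know that $\vs{n}1$ is genuinely the representation of an integer rather than merely a formal concatenation of letters — but this is precisely what the forward implication of the equivalence guarantees, and it relies on $n$ being odd. Accordingly, I would present the proof as a direct citation of the odd case of Proposition~\ref{prop link 1}, mentioning the Proposition~\ref{prop n1} route only as an optional sanity check.
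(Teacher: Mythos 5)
Your proof is correct and follows exactly the paper's route: the paper's own proof of Corollary~\ref{cor n odd} is simply ``Immediate from the above Proposition,'' i.e., a direct citation of the odd case of Proposition~\ref{prop link 1}, which is precisely your main argument. Your optional cross-check via Proposition~\ref{prop n1} (yielding $a_0=1$ and $n_1=n$) is also valid and in fact retraces how Proposition~\ref{prop link 1} itself is proved, but it is not needed.
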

\begin{proof} Immediate from the above Proposition.
\end{proof}

\begin{remark}
The map $n \mapsto n/2$ for $n$ even is straightforward in the base $2$ representation of $n$: just remove the last digit, $0$. Symmetrically, Corollary~\ref{cor n odd} shows that the map $n \mapsto (3n+1)/2$ for $n$ odd is straightforward in the base $3/2$ representation of $n$: just add $1$ as rightmost digit. By any chance, could it happen that considering the map $n \mapsto T(n)$ simultaneously in base $2$ and in rational base $3/2$ may lead to new insights on the Collatz conjecture?
\end{remark}

\subsection{Saturated words}\label{sec saturated}

Given a number system $\mathcal{B}$ over a finite set of digits, we say that a positive integer $n \ge 1$ is \emph{saturated} if its representation $\vs{n}_\mathcal{B}$ in base $\mathcal{B}$ is of length strictly less than that of $\vs{m}_\mathcal{B}$ for all $m \ge n+1$. Accordingly, we say that $\vs{n}_\mathcal{B}$ is a \emph{saturated word}. For instance in base $10$, the saturated integers and corresponding words are the $n=10^\ell-1=9\cdots9$. 

\smallskip
The digits $1,2$ of the saturated words in rational base $3/2$ turn out to be unpredictable in some sense and closely related to the $3x+1$ problem. Recall that $|w|$ denotes the length of the finite word $w$.

\begin{notation} Given $k \ge 1$, we denote by $\sat(k)$ the largest positive integer whose representation in base $3/2$ is of length $k$. That is,
$$
\sat(k) = \max \{n \in \N, |\vs{n}| = k\}. 
$$
\end{notation}

\begin{proposition}\label{prop sat} For all $k \ge 1$, $\vs{\sat(k)}$ is the unique admissible word of length $k$ with digits in $\{1,2\}$.
\end{proposition}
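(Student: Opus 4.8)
The plan is to prove the statement in two halves: first that there is at most one admissible word of length $k$ whose digits all lie in $\{1,2\}$, and second that $\vs{\sat(k)}$ is such a word. The cleanest route exploits the odometer (Proposition~\ref{prop odometer}) together with Proposition~\ref{prop link 1}.

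For \emph{uniqueness}, I would argue that among all admissible words of length $k$, a word with no zero digit must be lexicographically maximal, hence unique. Concretely, suppose $w=a_{k-1}\cdots a_0 \in \LL$ has all digits in $\{1,2\}$. I claim $w$ is the lexicographically largest word of length $k$ in $\LL$. The key lever is Proposition~\ref{prop link 1}: from an admissible $w'$, the admissible one-letter extensions are either $\{w'1\}$ (when the underlying $n$ is odd) or $\{w'0,w'2\}$ (when $n$ is even). Thus at each position the \emph{available} digits are either the singleton $\{1\}$ or the pair $\{0,2\}$; in both cases the largest available digit is determined, and choosing it greedily builds the lexicographically largest admissible word of each length. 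Since a digit of $1$ is forced exactly when only $\{1\}$ is available and a digit of $2$ is the greedy choice when $\{0,2\}$ is available, the greedy (lexicographically maximal) word automatically avoids the digit $0$. Conversely any admissible all-$\{1,2\}$ word must coincide with this greedy word, because at every step where it does not take the forced $1$ it must have taken $2$ over $0$ — i.e.\ the greedy choice — so it is the unique lexicographically maximal word, proving uniqueness.

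For the identification with $\sat(k)$, I would combine uniqueness with the observation that the lexicographically largest word of a given length represents the largest integer of that length. Since $|\vs{n}|$ is nondecreasing in a suitable sense and the map $n \mapsto \vs{n}$ is a bijection $\N \to \LL$ respecting the prefix/odometer structure, the integer whose representation is lexicographically maximal among length-$k$ words is exactly $\sat(k)$. Hence the unique all-$\{1,2\}$ word of length $k$ is $\vs{\sat(k)}$. An alternative, perhaps more self-contained, phrasing: show directly that $\vs{\sat(k)}$ has no zero digit by noting that if $\vs{\sat(k)}=u0v$ with $v \in \{1,2\}^*$, then by the odometer $\vs{\sat(k)+1}=u2v^\sigma$ has the \emph{same} length $k$, contradicting the maximality defining $\sat(k)$; thus $\vs{\sat(k)}$ contains no $0$, and uniqueness finishes the proof.

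The main obstacle I anticipate is pinning down precisely why ``lexicographically largest of length $k$'' coincides with ``largest integer of length $k$'', since larger length always means larger value but within a fixed length one must verify the correspondence between the lexicographic order on admissible words and the usual order on $\N$. I would handle this by establishing that $m < n$ with $|\vs{m}|=|\vs{n}|$ forces $\vs{m} <_{\mathrm{lex}} \vs{n}$, which follows from the odometer description, as incrementing $n$ never decreases the word lexicographically within a fixed length. The odometer-based argument in the alternative phrasing sidesteps this subtlety most economically, so I expect that to be the version I ultimately commit to: it reduces everything to the single clean fact that an interior $0$ can always be ``spent'' by the odometer to produce a same-length successor, contradicting saturation.
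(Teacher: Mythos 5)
Your proof is correct, but it distributes the work differently from the paper. The paper's entire proof is the single odometer observation that you call the ``alternative phrasing'': if $\vs{n}$ has length $k$ and contains a (non-leading) digit $0$, then $\vs{n+1}$ still has length $k$; hence $|\vs{n+1}|>|\vs{n}|$ happens exactly when $\vs{n}$ has no $0$ digit. This one equivalence yields both halves at once: $\vs{\sat(k)}$ cannot contain a $0$ (else its successor would also have length $k$, contradicting maximality), and conversely any admissible length-$k$ word without a $0$ must be the last word of its length --- lengths being nondecreasing under the odometer --- hence equal to $\vs{\sat(k)}$. You use the same contradiction for existence, but you obtain uniqueness by a different mechanism, namely Proposition~\ref{prop link 1}: from any admissible prefix representing $n$, the only digit of $\{1,2\}$ that can be appended is forced ($1$ if $n$ is odd, $2$ if $n$ is even), so there is at most one all-$\{1,2\}$ admissible word of each length. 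That argument is sound, and it is in fact a bit more informative than the paper's: it shows that the successive digits of $\vs{\sat(k)}$ record the parities of $\sat(1),\sat(2),\dots$, which is precisely what the paper exploits afterwards in Propositions~\ref{prop sat k-1} and~\ref{prop sat rec}. What it costs you is the lexicographic detour of your first paragraph: the forced-digit argument gives uniqueness directly, and wrapping it in ``lexicographically maximal'' language is what created the order-correspondence worry (lex order on words of fixed length versus the usual order on $\N$) that you then had to sidestep. In your committed version that worry is indeed vacuous, since the identification with $\sat(k)$ comes from the odometer contradiction and not from any comparison of orders, so the final proof stands.
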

\begin{proof} By the odometer as described in Proposition~\ref{prop odometer}, if $\vs{n}$ is of length $k$ and contains a $0$ digit, then $\vs{n+1}$ is of the same length $k$. This shows that the inequality $|\vs{n+1}| > |\vs{n}|$ happens exactly when $\vs{n}$ contains no $0$ digit except possibly as an irrelevant prefix.
\end{proof}

For instance, a look at $\vs{n}$ for $1 \le n \le 27$ in Table~\ref{tab words} yields $\sat(k)$ and the corresponding word for $k \le 7$ in Table~\ref{tab sat} below.
\begin{table}[h]
$$
\begin{array}{|r||c|l|}
\hline
k & \sat(k) & \vs{\sat(k)} \\
\hline
1 & 1 & 2 \\
2 & 2 & 21 \\
3 & 4 & 212 \\
4 & 7 & 2122 \\
5 & 11 & 21221 \\
6 & 17 & 212211 \\
7 & 26 & 2122111 \\
\hline
\end{array}
$$
\caption{The first seven saturated numbers and words}
\label{tab sat}
\end{table}

\noindent
Observe in Table~\ref{tab sat} that $\vs{\sat(k-1)}$ is a prefix of $\vs{\sat(k)}$. Indeed, this is true for all $k \ge 1$.

\begin{proposition}\label{prop sat k-1} For all $k \ge 2$, we have $\vs{\sat(k)}=\vs{\sat(k-1)}a_0$ where $a_0 \in \{1,2\}$ and $a_0 \equiv \sat(k-1) \bmod 2$.
\end{proposition}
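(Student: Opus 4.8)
The plan is to show that $\vs{\sat(k)}$ and $\vs{\sat(k-1)}$ differ by a single rightmost digit, using the characterization from Proposition~\ref{prop sat} that saturated words are exactly the admissible words with all digits in $\{1,2\}$. First I would recall that by Proposition~\ref{prop sat}, $\vs{\sat(k-1)}$ is the unique admissible word of length $k-1$ whose digits all lie in $\{1,2\}$, and likewise $\vs{\sat(k)}$ is the unique such word of length $k$. The goal is then to produce a length-$k$ admissible word, all of whose digits are in $\{1,2\}$, by appending a single digit $a_0 \in \{1,2\}$ to $\vs{\sat(k-1)}$; by uniqueness this word must coincide with $\vs{\sat(k)}$.

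The key tool is Proposition~\ref{prop link 1}, which tells us exactly which one-digit extensions of an admissible word remain admissible. Write $n = \sat(k-1)$. If $n$ is odd, Proposition~\ref{prop link 1} gives $\vs{n}1 \in \LL$, and since $\vs{n} = \vs{\sat(k-1)}$ has all digits in $\{1,2\}$, the extended word $\vs{n}1$ has all digits in $\{1,2\}$ as well and has length $k$; hence it equals $\vs{\sat(k)}$, with $a_0 = 1 \equiv n \bmod 2$. If $n$ is even, Proposition~\ref{prop link 1} gives $\vs{n}0, \vs{n}2 \in \LL$ but $\vs{n}1 \notin \LL$; the extension $\vs{n}0$ introduces a $0$ digit and so is not saturated, whereas $\vs{n}2$ keeps all digits in $\{1,2\}$ and has length $k$, hence equals $\vs{\sat(k)}$, with $a_0 = 2 \equiv n \bmod 2$. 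In both cases $a_0 \in \{1,2\}$ and $a_0 \equiv \sat(k-1) \bmod 2$, which is precisely the claim.

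The one point requiring a little care is the claim that such an extension of $\vs{\sat(k-1)}$ genuinely produces $\vs{\sat(k)}$ rather than merely some length-$k$ admissible word with digits in $\{1,2\}$; this is where the uniqueness in Proposition~\ref{prop sat} does the real work, guaranteeing there is only one candidate. I would also note explicitly that in the even case the prohibited extension $\vs{n}1$ does not threaten the argument, since we never needed it, and that the forced choice between appending $1$ (odd case) or $2$ (even case) is exactly what ties $a_0$ to the parity of $\sat(k-1)$. I expect no serious obstacle here: the combinatorial content is entirely packaged in Propositions~\ref{prop link 1} and~\ref{prop sat}, and the argument is a short case split on the parity of $\sat(k-1)$.
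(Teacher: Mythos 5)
Your proof is correct, but it runs in the opposite direction from the paper's. The paper argues top-down: by prefix-closedness of $\LL$, the length $k-1$ prefix of $\vs{\sat(k)}$ is admissible; since that prefix obviously has all digits in $\{1,2\}$, the uniqueness in Proposition~\ref{prop sat} (applied at length $k-1$) forces it to equal $\vs{\sat(k-1)}$, and the congruence $a_0 \equiv \sat(k-1) \bmod 2$ then falls out of Proposition~\ref{prop n1} with no case analysis at all. You argue bottom-up: starting from $\vs{\sat(k-1)}$, you append the unique digit of $\{1,2\}$ that Proposition~\ref{prop link 1} permits (namely $1$ if $\sat(k-1)$ is odd, $2$ if it is even), obtaining an admissible length-$k$ word over $\{1,2\}$, and you then invoke the uniqueness in Proposition~\ref{prop sat} at length $k$ to identify this word with $\vs{\sat(k)}$. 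Both arguments are sound and equally short; the paper's needs prefix-closedness but no case split, while yours trades that for a parity case split and has the bonus of being constructive: it exhibits $\vs{\sat(k)}$ explicitly from $\vs{\sat(k-1)}$, in effect proving the recursion $\sat(k)=U(\sat(k-1))$ of Proposition~\ref{prop sat rec} on the spot, whereas the paper derives that recursion afterwards as a separate consequence of this proposition.
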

\begin{proof} Since $\LL$ is prefix-closed, it follows from Proposition~\ref{prop sat} that the length $k-1$ prefix of $\vs{\sat(k)}$ equals $\vs{\sat(k-1)}$. Thus $\vs{\sat(k)}=\vs{\sat(k-1)}a_0$ where $a_0 \in \{1,2\}$, and $a_0 \equiv \sat(k-1) \bmod 2$ by Proposition~\ref{prop n1}.
\end{proof}

\subsection{The function $U$}\label{sec U} It turns out that the sequence $(\sat(k))_{k \ge 1}$ coincides with the sequence of iterates $U^{(k)}(0)$ of a certain function $U$ closely related to the Collatz function $T$. See Proposition~\ref{prop sat rec} below.

\begin{notation} Let $U:\N \to \N$ be defined as
$$
U(n) = 
\begin{cases}
\ (3n+2)/2=T(n)+n+1 &\textrm{ if } n \equiv 0 \bmod 2,\\
\ (3n+1)/2=T(n) &\textrm{ if } n \equiv 1 \bmod 2.
 \end{cases}
$$
\end{notation}

\begin{remark}\label{rem U and T} We have $U(n) \equiv T(n)+n+1 \bmod 2$ for all $n \in \N$.
\end{remark}

Starting from any $n_0 \in \N$, let us consider the sequence of iterates $U^{(k)}(n_0)$ for $k \ge 1$. The following proposition shows that, for the sequence of corresponding words $\vs{U^{(k)}(n_0)}$ in rational base $3/2$, \emph{each term is a prefix of the next one and the added digit is either $1$ or $2$.}

\begin{proposition}\label{prop u 2}
Let $n \in \N$. Then 
$$
\vs{U(n)}=
\begin{cases}
\vs{n}1 &\textrm{ if } n \equiv 1 \bmod 2, \\
\vs{n}2 &\textrm{ if } n \equiv 0 \bmod 2.
\end{cases}
$$
\end{proposition}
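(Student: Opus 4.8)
The plan is to prove this by a direct case analysis on the parity of $n$, invoking Proposition~\ref{prop link 1}, whose two branches are tailored precisely to the two cases appearing in the definition of $U$. Writing $w=\vs{n}$, the objective in each case is to recognize $U(n)$ as exactly the integer whose representation is the admissible extension of $w$ that Proposition~\ref{prop link 1} predicts, and then to read off the appended digit.

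First I would treat the odd case. If $n\equiv 1 \bmod 2$, then by the definition of $U$ we have $U(n)=(3n+1)/2$. Since $n$ is odd, the first branch of Proposition~\ref{prop link 1} applies and yields $w1\in\LL$ together with $w1=\vs{(3n+1)/2}$. Matching the right-hand side with $U(n)$ gives $\vs{U(n)}=\vs{n}1$, as desired. Next I would treat the even case symmetrically. If $n\equiv 0 \bmod 2$, then $U(n)=(3n+2)/2$, and since $n$ is even the second branch of Proposition~\ref{prop link 1} applies, giving $w2\in\LL$ with $w2=\vs{(3n+2)/2}$. Identifying this with $U(n)$ yields $\vs{U(n)}=\vs{n}2$, which completes the case analysis and hence the proof.

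I do not expect a genuine obstacle here, since the function $U$ appears to have been engineered so as to select, within each parity class, exactly the extension ($w1$ for odd $n$, $w2$ for even $n$) that Proposition~\ref{prop link 1} certifies to be admissible; the statement is essentially an immediate corollary. The only point demanding care is the bookkeeping: one must confirm that the arithmetic expressions $(3n+1)/2$ and $(3n+2)/2$ occurring in the definition of $U$ coincide verbatim with the integers whose representations Proposition~\ref{prop link 1} identifies, so that no hidden parity constraint or reduction of a leading $0$ intervenes. Once this routine matching is checked, the two cases close the argument.
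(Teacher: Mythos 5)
Your proof is correct and follows exactly the route of the paper, which simply states that the result is straightforward from Proposition~\ref{prop link 1}; your case analysis on the parity of $n$ spells out the same matching of $U(n)$ with $(3n+1)/2$ or $(3n+2)/2$ that the paper leaves implicit.
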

\begin{proof} Straightforward from Proposition~\ref{prop link 1}.
\end{proof}

A particularly interesting sequence of iterates $U^{(k)}(n_0)$ is for $n_0=0$.

\begin{proposition}\label{prop sat rec} For all $k \ge 1$, we have $\sat(k)=U^{(k)}(0)$.
\end{proposition}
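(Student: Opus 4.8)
The plan is to proceed by induction on $k$, exploiting the fact that Propositions~\ref{prop sat k-1} and~\ref{prop u 2} describe two \emph{a priori} different recursions that turn out to append the very same digit. Both the sequence $(\sat(k))_k$ and the orbit $(U^{(k)}(0))_k$ build up their base-$3/2$ words letter by letter, and the letter added at each step is governed by the same parity condition; matching these two recursions is the whole content of the statement.

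For the base case $k=1$, I would simply compute: since $0 \equiv 0 \bmod 2$, the definition gives $U(0) = (3\cdot 0 + 2)/2 = 1$, and on the other side $\sat(1) = 1$ because the only admissible one-letter word with digits in $\{1,2\}$ is $\vs{1} = 2$ (cf.\ Proposition~\ref{prop sat}). Thus $\sat(1) = U(0) = U^{(1)}(0)$.

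For the inductive step, fix $k \ge 2$ and assume $\sat(k-1) = U^{(k-1)}(0)$. The goal is $\sat(k) = U^{(k)}(0) = U(\sat(k-1))$. By Proposition~\ref{prop sat k-1}, $\vs{\sat(k)} = \vs{\sat(k-1)}\,a_0$ with $a_0 \in \{1,2\}$ and $a_0 \equiv \sat(k-1) \bmod 2$, so $a_0 = 1$ when $\sat(k-1)$ is odd and $a_0 = 2$ when it is even. On the other hand, Proposition~\ref{prop u 2} yields $\vs{U(\sat(k-1))} = \vs{\sat(k-1)}\,1$ in the odd case and $\vs{\sat(k-1)}\,2$ in the even case — precisely the same appended digit in each case. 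Hence $\vs{\sat(k)} = \vs{U(\sat(k-1))}$, and since the base-$3/2$ representation of an integer is unique, this forces $\sat(k) = U(\sat(k-1)) = U(U^{(k-1)}(0)) = U^{(k)}(0)$, closing the induction.

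I do not expect a genuine obstacle here: the two propositions are tailored so that the recursions coincide. The only points demanding care are the boundary between the base case $k=1$ and the range $k \ge 2$ where Proposition~\ref{prop sat k-1} is available, the verification that the parity-selected digit $a_0$ agrees between the two constructions in both parities, and the final appeal to \emph{uniqueness} of representations, which is what licenses passing from an equality of words back to an equality of integers.
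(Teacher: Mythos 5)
Your proof is correct and follows essentially the same route as the paper: induction on $k$, with Proposition~\ref{prop sat k-1} driving the inductive step; the paper merely phrases the step arithmetically, writing $\sat(k)=\bigl(3\sat(k-1)+a_0\bigr)/2$ and invoking the definition of $U$, whereas you compare the appended digits via Proposition~\ref{prop u 2} and then pass from equal words to equal integers by uniqueness of representations. Incidentally, your base case ($\sat(1)=1=U(0)$, since $\vs{1}=2$ is the only admissible word of length $1$) is stated more accurately than the paper's, which misprints it as $\sat(1)=2$ and evaluates $U(1)$ rather than $U^{(1)}(0)=U(0)$.
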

\begin{proof} For $k=1$, we have $\sat(1)=2$ and $U(1)=T(1)=2$. For $k \ge 2$, we only need show $\sat(k)=U(\sat(k-1))$. Indeed, it follows from Proposition~\ref{prop sat k-1} that
$$\sat(k)= \frac{3\sat(k-1)+a_0}2$$
where $a_0 \in \{1,2\}$ and $a_0 \equiv \sat(k-1) \bmod 2$. Hence $\sat(k)=U(\sat(k-1))$ by definition of $U$, as desired.
\end{proof}

Consequently, the growth factor of the sequence $\sat(k)$ tends to $3/2$.

\section{Open questions}\label{sec questions}

We conclude this note with two questions and two conjectures.

\begin{question}\label{ques 1} Is the Collatz conjecture equivalent to some behavior of the iteration of the function $U$? For instance, starting with any $n_0 \in \N$, is it true that the proportion of $1$'s in $\vs{U^{(k)}(n_0)}$ tends to $1/2$ as $k$ goes to infinity, as supported by numerical evidence? Note that by Proposition~\ref{prop u 2}, the number of digits ``0'' remains constant along the trajectory of $n_0$.
\end{question}

\begin{table}[h]
\begin{tabular}{ll}
& 2122111221211221212112212112121111122121111111212222211221212212121\\
\noindent
& 1211222212212222221211111221221212211211222211122111222122221211212\\
\noindent
& 122122212122212121211111111212122222111112211121212222122111112112
\end{tabular}
\caption{The word $\vs{\sat(200)}=\vs{U^{200}(0)}$}
\label{tab 200}
\end{table}
More specifically, for $n_0=0$, we have $U^{(k)}(0)=\sat(k)$ for all $k \ge 1$ by Proposition~\ref{prop sat rec}. Having computed $\sat(k)$ up to $k = 2^{20}=1\hspace{0.7mm}048\hspace{0.7mm}576$, we find that the word $w=\vs{\sat(2^{20})}$, of length $k=2^{20}$, counts exactly $525\hspace{0.7mm}067$ digits ``1'' and $523\hspace{0.7mm}509$ digits ``2''. Moreover, for all $1 \le \ell < 17$, the word $w$ contains all $2^\ell$ words in $\{1,2\}^\ell$ as subwords, showing a great degree of randomness of its digits. See Table~\ref{tab 200} for the word $\vs{\sat(200)}$, with exactly 100 digits ``1''. This numerical evidence leads us to the following conjecture.

\begin{conjecture}\label{conj our} The proportion of digits ``1'' in the word $\vs{\sat(k)}$ tends to $1/2$ as $k$ tends to infinity.
\end{conjecture}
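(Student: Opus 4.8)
The plan is to reduce the statement to a question about the distribution of parities along the $U$-orbit of $0$, and then to attack that via $2$-adic dynamics. First I would reformulate. By Proposition~\ref{prop sat rec} we have $\sat(k)=U^{(k)}(0)$, and by Proposition~\ref{prop u 2} the $j$-th digit appended in passing from $\vs{\sat(j-1)}$ to $\vs{\sat(j)}$ is a $1$ precisely when $\sat(j-1)=U^{(j-1)}(0)$ is odd. Hence the number of digits ``$1$'' in $\vs{\sat(k)}$ equals
$$
\#\{\,0\le i\le k-1 \ :\ U^{(i)}(0)\ \text{is odd}\,\},
$$
and the conjecture is exactly the assertion that the proportion of odd integers in the trajectory $\big(U^{(i)}(0)\big)_{i\ge 0}$ tends to $1/2$. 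In this form it is the precise analogue, for the map $U$ and the single starting point $0$, of Conjecture~\ref{conj col2} for the map $T$.

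Next I would set up the $2$-adic framework. Writing $U(x)=(3x+2-\varepsilon)/2$ with $\varepsilon=x\bmod 2$, one checks that $3x+2-\varepsilon$ is always even, so $U$ extends to a continuous map $U:\Z_2\to\Z_2$ of the ring of $2$-adic integers; crucially, after halving, each of its two branches becomes an affine map with the \emph{odd} multiplier $3$. Consider the coding map $Q:\Z_2\to\{0,1\}^{\N}$ sending $x$ to its parity vector $\big(U^{(i)}(x)\bmod 2\big)_{i\ge 0}$. The central step is to prove that $Q$ is a measure-preserving bijection, carrying the Haar measure on $\Z_2$ to the uniform Bernoulli measure on $\{0,1\}^{\N}$; equivalently, that for every $k$ the parity vector of length $k$ depends only on $x\bmod 2^{k}$ and realizes each of the $2^{k}$ patterns exactly once as $x$ ranges over $\Z/2^{k}\Z$. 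This is the exact counterpart of the classical fact for the Collatz map, and I would prove it by induction on $k$, using that the two branches of $U$ induce a parity-compatible bijection $\Z/2^{k}\Z\to\{0,1\}\times\Z/2^{k-1}\Z$ (here the odd multiplier $3$ guarantees invertibility modulo powers of $2$). Granting this, the strong law of large numbers applied to the pushforward Bernoulli measure yields that for Haar-almost-every $x\in\Z_2$ the proportion of odd iterates of $U$ tends to $1/2$.

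The main obstacle is that this argument establishes the desired equidistribution only for almost every starting point, whereas the conjecture concerns the single orbit of $x=0$, which a priori could lie in the exceptional null set. This is precisely the gap that makes statements of this type as hard as the Collatz conjecture itself: transferring an almost-everywhere conclusion (or a density-one conclusion, in the style of the classical results of Terras and Korec) to one prescribed orbit requires genuinely new input. I would then look for special structure in the orbit of $0$ that might force genericity---for instance a self-similar or substitutive description of the infinite limit word $\mathbf{w}=\lim_k \vs{\sat(k)}$ whose letter frequencies could be read off from a Perron--Frobenius eigenvector. However, the numerical evidence reported above---near-equal counts of $1$'s and $2$'s, together with the occurrence of all short binary subwords---strongly suggests that $\mathbf{w}$ is pseudo-random and admits no such finite generating mechanism, so I expect this route to fail and a fundamentally new idea to be required.
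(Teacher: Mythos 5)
The statement you are attempting is not a theorem in the paper: it appears there as an open conjecture (Conjecture~\ref{conj our}), supported only by numerical evidence (the digit counts in $\vs{\sat(2^{20})}$ and the subword statistics), and the authors explicitly suggest it may be of the same order of difficulty as the Collatz conjecture itself. So there is no paper proof to compare against, and your attempt must be judged on its own terms, where it has a genuine --- and in fact fatal --- gap that you yourself identify. Your reduction is correct: by Propositions~\ref{prop sat rec} and~\ref{prop u 2}, the number of digits ``1'' in $\vs{\sat(k)}$ equals the number of odd terms among $U^{(i)}(0)$ for $0 \le i \le k-1$, so the conjecture is equivalent to the parities along the $U$-orbit of $0$ having asymptotic density $1/2$. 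Your $2$-adic step is also sound: $U$ extends continuously to $\Z_2$, and since the multiplier $3$ is a unit modulo every power of $2$, the length-$k$ parity vector depends only on $x \bmod 2^k$ and realizes each of the $2^k$ patterns exactly once; hence the parity-coding map carries Haar measure to the uniform Bernoulli measure, the exact analogue of the classical Terras--Everett--Lagarias result for $T$.

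But this yields the desired equidistribution only for Haar-almost every $x \in \Z_2$, and the single point $x=0$ carries Haar measure zero. Nothing in the Bernoulli structure prevents one prescribed orbit from being exceptional; indeed, the same argument applied to $T$ gives almost-everywhere equidistribution of parities, and yet Conjecture~\ref{conj col2} (equivalently, the Collatz conjecture) remains open. Transferring an almost-everywhere or density-one conclusion to a single specified orbit is exactly the wall that all such soft measure-theoretic arguments hit, and you do not supply the ``fundamentally new idea'' you correctly say is required; your closing remark that the limit word looks pseudo-random and non-substitutive argues, if anything, against rescuing the approach via self-similarity or a Perron--Frobenius frequency computation. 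So what you have is an honest, well-informed reformulation plus a heuristic for why the conjecture should be true --- consistent with its status in the paper as an open problem --- but not a proof.
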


Is Conjecture~\ref{conj our} equivalent to the Collatz conjecture? The question arises from the close relationship between the functions $T$ and $U$, Remark~\ref{rem U and T} and the analogy with Conjecture~\ref{conj col2}. At any rate, even if not equivalent to Conjecture~\ref{conj col2}, Conjecture~\ref{conj our} might well be of the same order of difficulty.

\smallskip
In fact, the numerical evidence below Table~\ref{tab 200} leads to an even stronger conjecture. Since $\vs{\sat(k-1)}$ is a prefix of $\vs{\sat(k)}$ for all $k \ge 2$, we may define the limit
$$
\vs{\sat(\infty)} = \lim_{k \to \infty} \vs{\sat(k)},
$$
an infinite word over $\{1,2\}$. Informally, recall that an infinite word $w$ over a finite alphabet $\mathcal{B}$ of cardinality $|\mathcal{B}|=b$ is said to be \emph{normal} if for all $\ell \ge 1$, all $b^\ell$ words in $\mathcal{B}^\ell$ occur as subwords in $w[n]$, the length $n$ prefix of $w$, with frequency tending to $1/b^\ell$ as $n \to \infty$. Our stronger conjecture runs as follows.

\begin{conjecture}\label{conj our 2} The word $\vs{\sat(\infty)}$ is normal.
\end{conjecture}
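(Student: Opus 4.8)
The plan is to recast normality of $\vs{\sat(\infty)}$ as an equidistribution statement for a single orbit of $U$ modulo powers of $2$, and then to isolate that equidistribution as the genuinely hard core of the problem. First I would record what the infinite word encodes. By Proposition~\ref{prop sat rec} we have $\sat(k)=U^{(k)}(0)$, and by Proposition~\ref{prop u 2} the letter appended in passing from $\vs{\sat(k-1)}$ to $\vs{\sat(k)}$ is $1$ or $2$ according as $\sat(k-1)$ is odd or even. Hence, writing $\vs{\sat(\infty)}=w_1w_2w_3\cdots$, we have $w_k=1$ iff $U^{(k-1)}(0)$ is odd. Thus $\vs{\sat(\infty)}$ is, up to the relabelling $1\leftrightarrow\text{odd}$, $2\leftrightarrow\text{even}$, nothing but the \emph{parity sequence} of the orbit $(U^{(k)}(0))_{k\ge 0}$, and normality of the word is equivalent to normality of this binary parity sequence.

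Second, I would establish a \emph{parity--vector bijection} for $U$, the analogue of the classical one for the Collatz map. For $\ell\ge 1$ and $n\in\N$ set
$$
P_\ell(n)=\bigl(n,\,U(n),\,\ldots,\,U^{(\ell-1)}(n)\bigr)\bmod 2\ \in\ \{0,1\}^\ell.
$$
The claim is that $P_\ell$ depends only on $n\bmod 2^\ell$ and induces a bijection $\Z/2^\ell\Z\to\{0,1\}^\ell$. This follows by induction on $\ell$: writing $P_{\ell+1}(n)=\bigl(n\bmod 2,\,P_\ell(U(n))\bigr)$, it suffices to show that $n\bmod 2^{\ell+1}\mapsto\bigl(n\bmod 2,\,U(n)\bmod 2^\ell\bigr)$ is a bijection $\Z/2^{\ell+1}\Z\to(\Z/2\Z)\times(\Z/2^\ell\Z)$. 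Two integers of equal parity lie in the same branch of $U$, on which $U$ is affine of slope $3/2$, so $U(n)-U(n')=\tfrac32(n-n')=3t$ when $n-n'=2t$; since $3$ is invertible modulo $2^\ell$, the congruence $U(n)\equiv U(n')\pmod{2^\ell}$ forces $t\equiv 0\pmod{2^\ell}$, i.e. $n\equiv n'\pmod{2^{\ell+1}}$. Injectivity together with a cardinality count gives the bijection, and composing with the inductive bijection on the second factor yields the claim.

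Third, I would combine the two. A block $b\in\{0,1\}^\ell$ occurs at position $k$ of the parity sequence exactly when $P_\ell(U^{(k)}(0))=b$, that is, by the bijection, exactly when $U^{(k)}(0)\equiv P_\ell^{-1}(b)\pmod{2^\ell}$. Since $b\mapsto P_\ell^{-1}(b)$ is a bijection onto $\Z/2^\ell\Z$, the frequency of $b$ in the prefix of length $n$ equals, up to an $O(\ell/n)$ boundary term, the frequency with which $U^{(k)}(0)$, $0\le k<n$, visits the residue class $P_\ell^{-1}(b)$ modulo $2^\ell$. Therefore $\vs{\sat(\infty)}$ is normal \emph{if and only if} for every $\ell\ge 1$ the single orbit $\bigl(U^{(k)}(0)\bigr)_{k\ge 0}$ is equidistributed modulo $2^\ell$, i.e. meets each residue class with asymptotic frequency $2^{-\ell}$.

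This reduction is as far as I can push things rigorously; the remaining step is the obstacle. Proving that one deterministic, unboundedly growing orbit of a Collatz-type map equidistributes modulo every $2^\ell$ is beyond current methods and is at least as hard as the Collatz conjecture itself: heuristically the induced dynamics on $\Z/2^\ell\Z$ behaves like a fair random walk whose stationary measure is uniform, but making such a heuristic rigorous for a \emph{single} trajectory is exactly what no known technique achieves. I would stress that this difficulty is structural rather than incidental, since normality forces the full subword complexity $2^\ell$, so $\vs{\sat(\infty)}$ can carry no automatic or substitutive structure of the kind that would render block counting tractable. In short, the reduction above turns Conjecture~\ref{conj our 2} into a clean $2$-adic equidistribution statement, but that statement is expected to be of the same order of difficulty as — or harder than — the Collatz conjecture, and I would present it as such rather than claim a complete proof.
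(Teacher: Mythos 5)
The statement you were asked to prove is not proved in the paper: Conjecture~\ref{conj our 2} is left open there, supported only by numerical evidence (the digit counts and subword statistics of $\vs{\sat(2^{20})}$), and the authors explicitly suggest it may be of the same order of difficulty as the Collatz conjecture itself. So there is no proof to compare yours against, and your decision to present a rigorous reduction while declining to claim a complete proof is the appropriate response; a purported full proof would have been a red flag.

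Your reduction itself is correct and goes beyond what the paper records. The identification of $\vs{\sat(\infty)}$ with the parity sequence of the orbit $\bigl(U^{(k)}(0)\bigr)_{k\ge 0}$ follows exactly as you say from Propositions~\ref{prop u 2} and~\ref{prop sat rec}. Your parity-vector bijection for $U$ is the analogue of the classical Terras--Everett lemma for the Collatz map $T$, and your argument for it is sound: on each parity class $U$ is affine of slope $3/2$, so $U(n)-U(n')=3t$ when $n-n'=2t$, and invertibility of $3$ modulo $2^\ell$ gives injectivity of $n \bmod 2^{\ell+1} \mapsto \bigl(n \bmod 2,\, U(n) \bmod 2^\ell\bigr)$, after which a cardinality count and induction finish the claim. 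The resulting equivalence --- $\vs{\sat(\infty)}$ is normal if and only if the orbit of $0$ under $U$ equidistributes modulo $2^\ell$ for every $\ell \ge 1$ --- is a genuine sharpening of the paper's discussion: it exhibits Conjecture~\ref{conj our} as precisely the case $\ell=1$ of Conjecture~\ref{conj our 2}, thereby making exact the paper's informal remark that the latter is ``even stronger,'' and it also gives a clean formulation of what Question~\ref{ques 1} is really asking for. The step you flag as out of reach --- equidistribution of a single deterministic orbit of a Collatz-type map modulo all powers of $2$ --- is indeed the open core and not a defect of your argument; no known technique handles it, which is exactly why the paper states this as a conjecture rather than a theorem.
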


Our second question is about fully understanding the Collatz map $T$ in rational base $3/2$.

\begin{question}\label{ques n even} For $n$ odd, we have seen that $\vs{T(n)}=\vs{n}1$. For $n$ even, how does one determine $\vs{T(n)}$ from $\vs{n}$? Is any answer susceptible to lead to new insights on the Collatz conjecture?
\end{question}

Note that determining the parity of $n \in \N$ from the digits $a_i$ of the word $\vs{n}=a_k\cdots a_0 \in \LL$ is more cumbersome than in base $2$ or $10$. Indeed, the following criterion is straightforward but not quite practical for calculations by hand.

\begin{proposition} Let $n \in \N \setminus \{0\}$ and $\vs{n}=a_k\cdots a_0 \in \LL$. Then
$$
n \equiv 0 \bmod 2 \ \iff \ \sum_{i=0}^k 2^{k-i}3^ia_i \equiv 0 \bmod 2^{k+2}. 
$$
\end{proposition}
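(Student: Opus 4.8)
The plan is to start from the defining expression
$$
n=\frac12\sum_{i=0}^k a_i\left(\frac32\right)^i,
$$
clear denominators to obtain an integer identity, and then reduce modulo a suitable power of $2$. Multiplying both sides by $2^{k+1}$ gives
$$
2^{k+1}n=\sum_{i=0}^k a_i\,2^{k-i}\,3^i,
$$
since $2\cdot 2^k(3/2)^i=2^{k+1-i}3^i=2\cdot 2^{k-i}3^i$. This is the clean integer identity I want, and it already exhibits the sum $\sum_{i=0}^k 2^{k-i}3^i a_i$ on the right-hand side.

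From here the argument is purely arithmetic. I would observe that $n\equiv 0\bmod 2$ is equivalent to $2^{k+1}n\equiv 0\bmod 2^{k+2}$, since multiplying a congruence by the unit-free factor $2^{k+1}$ shifts the modulus from $2$ to $2^{k+2}$. Combining this with the integer identity above, $n$ is even if and only if
$$
\sum_{i=0}^k 2^{k-i}3^i a_i\equiv 0\bmod 2^{k+2},
$$
which is exactly the stated criterion. So the whole proposition reduces to the elementary equivalence $n\equiv 0\bmod 2\iff 2^{k+1}n\equiv 0\bmod 2^{k+2}$ together with the clearing of denominators.

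I expect no serious obstacle here; the only point requiring a moment's care is the bookkeeping of the powers of $2$. One must check that $2^{k+1}$ is the correct multiplier so that each term $a_i(3/2)^i$ becomes an integer and the surviving power of $2$ in front of $n$ is precisely one more than the modulus appearing on the right. Concretely, the $i=k$ term contributes $a_k 3^k$ with no factor of $2$, confirming that $2^{k+1}$ is exactly enough to clear all denominators, and the factor $\tfrac12$ in the definition accounts for the extra power making the modulus $2^{k+2}$ rather than $2^{k+1}$. Once these exponents are verified, the equivalence is immediate and the proof is complete.
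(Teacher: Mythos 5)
Your proof is correct and is essentially identical to the paper's: both clear denominators by multiplying by $2^{k+1}$ to obtain the integer identity $2^{k+1}n=\sum_{i=0}^k 2^{k-i}3^i a_i$, and then use the elementary equivalence $n \equiv 0 \bmod 2 \iff 2^{k+1}n \equiv 0 \bmod 2^{k+2}$. The exponent bookkeeping you flag as the only delicate point is indeed correct as you have it.
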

\begin{proof} We have $n=\frac12 (\sum_{i=0}^k (3/2)^ia_i)=2^{-(k+1)}(\sum_{i=0}^k 2^{k-i}3^ia_i)$. Hence $n \equiv 0 \bmod 2 \iff 2^{k+1}n \equiv 0 \bmod 2^{k+2} \iff \sum_{i=0}^k 2^{k-i}3^ia_i \equiv 0 \bmod 2^{k+2}$.
\end{proof}

\smallskip
\noindent
\textbf{Acknowledgment.} We thank the anonymous referee for his/her careful reading and constructive comments.

\bibliography{base32-collatz}
\bibliographystyle{plain}

\vspace{0.3cm}

\noindent
{\small
\textbf{Authors addresses}

\medskip
\noindent
$\bullet$ Shalom {\sc Eliahou}\textsuperscript{a,b},

\noindent
\textsuperscript{a}Univ. Littoral C\^ote d'Opale, UR 2597 - LMPA - Laboratoire de Math\'ematiques Pures et Appliqu\'ees Joseph Liouville, F-62100 Calais, France\\
\textsuperscript{b}CNRS, FR2037, France\\
\email{eliahou@univ-littoral.fr}

\medskip
\noindent
$\bullet$ Jean-Louis {\sc Verger-Gaugry},

\noindent
Universit\'e-Grenoble-Alpes, Universit\'e Savoie Mont Blanc, F-73000 Chamb\'ery, France\\
\email{verger-gaugry.jean-louis@unsechat-math.fr}
}

\end{document}